\title[]{Newton--Okounkov bodies and complexity functions}
\thanks{DS was supported by DFG research fellowship SCHM 2332/1-1}
\keywords{Ample, Big line bundle, Okounkov body}
\author{Mihai Fulger}
\address{Institute of Mathematics of the Romanian Academy, P. O. Box 1-764, RO-014700,
Bucharest, Romania}
\curraddr{Department of Mathematics,
	Princeton University,
	Princeton, NJ 08544, USA}
\email{afulger@princeton.edu}
\author{David Schmitz}
\address{Department of Mathematics, 
   Stony Brook University, 
   Stony Brook, 
   NY 11794, USA}
\email{david.schmitz@stonybrook.edu}
\renewcommand\ge{\geqslant}  % from amssymb
\renewcommand\le{\leqslant}  % from amssymb
\renewcommand\geq{\geqslant}  % from amssymb
\renewcommand\leq{\leqslant}  % from amssymb
\renewcommand\epsilon{\varepsilon}
\renewcommand\phi{\varphi}
\renewcommand\subset{\subseteq}
\renewcommand\tilde{\widetilde}
\renewcommand\O{{\mathcal O}}
\renewcommand\P{\mathbb P}
\renewcommand{\tilde}{\widetilde}
\newcommand\newop[2]{\newcommand#1{\mathop{\rm #2}\nolimits}}
\newop{\dv}{div}
\newop{\id}{id}
\newop{\ord}{ord}
\newop{\Div}{Div}
\newop{\Ext}{Ext}
\newop{\Face}{Face}
\newop{\Vol}{Vol}
\newop{\Neg}{Neg}
\newop{\Cox}{Cox}
\newop{\Null}{Null}
\newop{\Nef}{Nef}
\newop{\supp}{supp}
\newop{\NE}{\overline{{NE}}}
\newop{\Eff}{\overline{{Eff}}}
\newop{\valnu}{\nu_{Y_\bullet}}
\newop\NS{NS} 
\newop\vol{vol}
\newop\BigCone{Big}
\newtheorem{prop}{Proposition}
\newtheorem{lemma}[prop]{Lemma}
\newtheorem{thm}[prop]{Theorem}
\theoremstyle{definition}
\newtheorem{example}[prop]{Example}
\newtheorem{remark}[prop]{Remark}
\newtheorem*{ques}{Question}
\begin{document}

\begin{abstract}
We show that quite universally the holonomicity of the complexity function of a divisor does not predict whether the Newton--Okounkov body is polyhedral for every choice of a flag. 
\end{abstract}

\maketitle

Let $X$ be a complex projective variety of dimension $d$ with a complete flag of subvarieties $Y_{\bullet}:\ X=Y_0\supset\ldots \supset Y_d$ such that $\dim Y_i=d-i$ and $Y_d$ is a smooth point for all $Y_i$.
For any big divisor $D$ on $X$, \cite{LM} and \cite{KK} construct a convex body $\Delta_{Y_{\bullet}}(X;D)\subset\mathbb R^d$. This Newton--Okounkov body encodes asymptotic information about the sections of multiples of $D$. 
For example, its Euclidean volume and the volume of $D$ as
a divisor on $X$ (cf. \cite[\S2.2.C]{laz}) coincide up to a normalization factor.

Work of \cite{LM} and \cite{J} suggests that the Newton--Okounkov bodies encode many 
important invariants of the divisor $D$. For example \cite{J} shows that the set of bodies 
$\Delta_{Y_{\bullet}}(X;D)$ considering \emph{all} flags $Y_{\bullet}$ as above recovers the numerical
class of $D$. More recently, \cite{KL1,KL2} recover the non-ample locus ${\mathbb B}_+(D)$
and non-nef locus ${\mathbb B}_-(D)$ of $D$ from the bodies associated to certain flags.  
On the other hand, \cite{klm} show that the particular shape of $\Delta_{Y_{\bullet}}(X;D)$
for many flags $Y_{\bullet}$ is not necessarily descriptive of the positivity properties of $D$.

In this note we further this investigation on algebro-geometric consequences of the shape of Newton--Okounkov bodies. We show that quite universally a certain regularity condition on the cohomology of
$D$ that has been considered by Katzarkov and Liu \cite{kl} among others does not determine the polyhedrality of $\Delta_{Y_{\bullet}}(X;D)$. The strategy is to lift a generalization of an example of \cite{klm} to some
representative of each birational class of varieties of dimension $d\geq 4$.

The complexity function of a functor $F$ on a category $\mathcal C$ with respect to objects $E_1,E_2$ in $\mathcal C$ is defined in \cite{ekk,kl} by
$$
	E_{E_1,E_2,F}(x,q)=\sum_{n,i} \dim\Ext^i\left(E_1,F^nE_2\right)x^nq^i.
$$
In particular, they ask whether $E_{E_1,E_2,F}$ is holonomic. This would imply that the function 
$E_{E_1,E_2,F}(x,q)$ is determined by a finite set of data. Concretely, a formal power series $f=\sum_{i,n}a_{i,n}x^nq^i$ is \emph{holonomic}, or \emph{D-finite} (see \cite[Definition 15]{z}), if there exist tuples of not all zero polynomials $p_0,\dots,p_k$ and $g_0,\dots,g_k$ in $x$ and $q$ such that $f$ satisfies linear differential equations
\begin{eqnarray*}
	p_0\frac{\partial^kf}{\partial x^k}+p_1\frac{\partial^{k-1}f}{\partial x^{k-1}}+\dots+p_kf &=& 0\\
g_0\frac{\partial^kf}{\partial q^k}+g_1\frac{\partial^{k-1}f}{\partial q^{k-1}}+\dots+g_kf &=& 0.
\end{eqnarray*}
\noindent In general, $E_{E_1,E_2,F}(x,q)$ cannot be expected to be holonomic. 
\cite{ekk} gives an example where $X$ is a $\mathbb P^1$-bundle over a product of elliptic curves, $E_1=E_2=\mathcal O_X$, and $F$ is the twist by an appropriately chosen big divisor. 
The authors suggest that this failure of being holonomic can be linked to the non-polyhedrality of the categorical Okounkov-body of $D$ (cf. \cite[Section 4]{kl}). We consider the following 

\begin{ques}Let $X=Y_0\supset Y_1\supset\ldots\supset Y_d$ be an admissible flag
on the projective variety $X$ of dimension $d$, i.e., $Y_k$ is irreducible
of codimension $k$ and smooth at the point $Y_d$ for all $k$.
Let $D$ be a big divisor on $X$ whose associated Newton--Okounkov body 
$\Delta_{Y_{\bullet}}(X;D)$ is nonpolyhedral.
Is the complexity function 
$$E_{X,D}(x,q):=\sum_{n,i}\dim H^i\left(X;\mathcal O_X(nD)\right)x^nq^i$$ 
necessarily non-holonomic?
\end{ques}

Our first counterexample has $D$ \emph{ample}:

\begin{example}[\cite{klm}]\label{ex:klm}
Let $X=\mathbb P^2\times\mathbb P^2$. Consider $E\subset\mathbb P^2$ an elliptic curve
without complex multiplication. Construct the flag:
\begin{itemize}
\item $Y_0=X$
\item $Y_1=\mathbb P^2\times E$ 
\item $Y_2=E\times E$ 
\item $Y_3$ is general in the complete linear series $|f_1+f_2+\Delta_E|$ on $Y_2$.
Here $f_1,f_2$ are the fibers of the projections to the factors in the product, and $\Delta$ is the diagonal.
\item $Y_4$ is a general point on $Y_3$.
\end{itemize}
Let $D$ be a divisor with associated line bundle $\mathcal O(3,1)$.
\cite[Example 3.4]{klm} shows that the associated Newton--Okounkov body $\Delta_{Y_{\bullet}}(X;D)$
is not polyhedral. 

On the other hand, the complexity function of an ample divisor is holonomic by the following lemma.
\qed
\end{example}

\begin{lemma}\label{l:semiample} Let $X$ be a projective variety of dimension $d$ and let $D$ be a semi-ample divisor on it.
Then the complexity function $E_{X,D}(x,q)$ is holonomic.
\end{lemma}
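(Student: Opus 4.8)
The plan is to prove something slightly stronger: that $E_{X,D}(x,q)$ is in fact a \emph{rational} function of $x$ and $q$, and then to note that rational functions are always holonomic in the sense of the definition above. The key input is that, for each fixed $i$, the sequence $n\mapsto h^i(X,\mathcal O_X(nD)):=\dim H^i(X,\mathcal O_X(nD))$ is eventually a quasi-polynomial in $n$; since only the indices $0\le i\le d$ contribute (higher cohomology vanishes), this makes $E_{X,D}$ a polynomial in $q$ whose coefficients are rational functions of $x$.

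To obtain the eventual quasi-polynomiality I would pass to the semiample fibration. Choose $m\ge 1$ with $\mathcal O_X(mD)$ globally generated, and let $\phi\colon X\to Z$ be the induced morphism to a projective variety $Z$, so that $mD=\phi^*A$ for an ample divisor $A$ on $Z$. Fix a residue $r\in\{0,\dots,m-1\}$. For $k\ge 0$ we have $\mathcal O_X((km+r)D)\cong\mathcal O_X(rD)\otimes\phi^*\mathcal O_Z(kA)$, so the projection formula gives $R^q\phi_*\mathcal O_X((km+r)D)\cong\bigl(R^q\phi_*\mathcal O_X(rD)\bigr)\otimes\mathcal O_Z(kA)$, and the Leray spectral sequence reads
$$E_2^{p,q}=H^p\bigl(Z,\,R^q\phi_*\mathcal O_X(rD)\otimes\mathcal O_Z(kA)\bigr)\ \Longrightarrow\ H^{p+q}\bigl(X,\mathcal O_X((km+r)D)\bigr).$$
The sheaves $R^q\phi_*\mathcal O_X(rD)$ are coherent and there are only finitely many of them (over all $q$ and all $r$), so Serre vanishing on $Z$ produces a $k_0$ with $E_2^{p,q}=0$ for every $p>0$ once $k\ge k_0$; the spectral sequence then degenerates, and a second application of Serre vanishing gives, for $k\ge k_0$,
$$h^i\bigl(X,\mathcal O_X((km+r)D)\bigr)=h^0\bigl(Z,\,R^i\phi_*\mathcal O_X(rD)\otimes\mathcal O_Z(kA)\bigr)=\chi\bigl(Z,\,R^i\phi_*\mathcal O_X(rD)\otimes\mathcal O_Z(kA)\bigr),$$
which is the Hilbert polynomial of the coherent sheaf $R^i\phi_*\mathcal O_X(rD)$ with respect to $A$, a polynomial in $k$.

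Letting $r$ run over all residues modulo $m$, I conclude that for each $i$ there is an $N_i$ and a quasi-polynomial $Q_i$ of period dividing $m$ with $h^i(X,\mathcal O_X(nD))=Q_i(n)$ for all $n\ge N_i$. Hence the one-variable series $f_i(x):=\sum_{n\ge 0}h^i(X,\mathcal O_X(nD))\,x^n$ differs from $\sum_{n\ge 0}Q_i(n)x^n$ by a polynomial, and the generating function of a quasi-polynomial is rational with denominator a power of $1-x^m$; thus $f_i\in\mathbb C(x)$. Since $H^i(X,\mathcal O_X(nD))=0$ for $i>d$, we get
$$E_{X,D}(x,q)=\sum_{i=0}^d f_i(x)\,q^i\ \in\ \mathbb C(x)[q]\ \subset\ \mathbb C(x,q).$$
Finally, a bivariate rational function is holonomic: it is a polynomial in $q$ of degree at most $d$, so $\partial^{d+1}E_{X,D}/\partial q^{d+1}=0$ is a linear differential equation in $q$ with polynomial coefficients; and $E_{X,D}$ together with $\partial E_{X,D}/\partial x$ lies in the one-dimensional $\mathbb C(x,q)$-vector space $\mathbb C(x,q)$, hence is linearly dependent over it, so clearing denominators yields polynomials $p_0,p_1\in\mathbb C[x,q]$, not both zero, with $p_0\,\partial E_{X,D}/\partial x+p_1\,E_{X,D}=0$.

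I expect the only real content to be in the middle step: one must make sure the Serre-vanishing threshold can be taken uniform in $q$ so that the Leray spectral sequence genuinely degenerates, after which the identification of each $h^i$ with a Hilbert polynomial of a pushforward twisted by an ample class on $Z$ is what forces the quasi-polynomial behaviour. Everything downstream — assembling the rational generating functions and checking $D$-finiteness of a rational function — is formal, and the case of $D$ merely nef but not semi-ample (where this argument breaks) is exactly what makes the Question nontrivial.
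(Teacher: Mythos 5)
Your proposal is correct and follows essentially the same route as the paper: pass to the semiample morphism $\phi$ with $mD=\phi^*A$, split by residue modulo $m$, use the projection formula, Leray, and Serre vanishing to show each $h^i(X,\mathcal O_X(nD))$ is eventually a (quasi-)polynomial in $n$, and conclude that $E_{X,D}$ is rational in $x$ and polynomial in $q$, hence holonomic. The only cosmetic differences are that the paper targets $\mathbb P^N$ directly rather than the image $Z$, and cites \cite{lip} for the final "rational (algebraic) implies holonomic" step where you verify the defining differential equations by hand.
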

\begin{proof}Assume first that $\mathcal O_X(D)$ is globally generated.
Let $\pi:X\to\mathbb P^N$ be the morphism determined by the 
complete linear series $|D|$. Then $D=\pi^*\mathcal O_{\mathbb P^N}(1)$.
The ampleness of the hyperplane class on $\mathbb P^N$, the Leray spectral sequence,
and Serre vanishing imply 
$$H^i(X;\mathcal O_X(nD))=H^0\left(\mathbb P^N;R^i\pi_*\mathcal O_X\otimes\mathcal O_{\mathbb P^N}(n)\right)$$
for $n$ sufficiently large. There exists polynomials $P_i$ such that $P_i(n)=\dim H^0(\mathbb P^N;R^i\pi_*\mathcal O_X\otimes\mathcal O_{\mathbb P^N}(n))$ for any $i$ and sufficiently large $n$.
Then up to finitely many terms, which in any case do not influence holonomicity (cf. \cite[Proposition 2.3.(ii)]{lip}),
$$E_{X,D}(x,q)=\sum_{i=0}^{d}q^i\sum_r P_i(r)x^r.$$
This is a rational function in $x$, polynomial in $q$. 
As such it is algebraic, and in particular holonomic (cf. \cite[Proposition 2.3]{lip}).

When $D$ is only semi-ample, let $m$ be such that $|mD|$ is basepoint free.
Let $\pi:X\to\mathbb P^N$ be the morphism determined by this linear series so that
$\mathcal O_X(mD)=\pi^*\mathcal O_{\mathbb P^N}(1)$.
Write $n=am+r$ with $0\leq r<m$.
Then $$H^i(X;\mathcal O_X(nD))=H^0\left(\mathbb P^N;R^i\pi_*\mathcal O_X(rD)\otimes\mathcal O_{\mathbb P^N}(a)\right).$$
For large $n$, as in the globally generated case, its dimension is $P_{i,r}(a)$ for some polynomial $P_{i,r}$. It is then easy to see that the complexity function is again algebraic. 
\end{proof}

We have so far seen that the answer to the question above is negative by giving one example. In the remainder of this note we will see that in a sense it is universally negative. Concretely, we show that in each birational equivalence class of varieties of dimension $d\ge4$ there exists a smooth model $X$ carrying an admissible flag and a big and semi-ample divisor such that the corresponding Newton--Okounkov body is non-polyhedral, while by Lemma \ref{l:semiample} the complexity function $E_{X,D}(x,q)$ is holonomic.

\begin{thm}
	Let $X$ be a normal projective variety of dimension $d\ge 4$. 
	Then there exists a birational model
	$\tilde X\to X$ containing an admissible flag $X_\bullet$ and a semi-ample
	divisor $H$ such that the Newton--Okounkov body $\Delta_{X_\bullet}(\tilde X;H)$ is non-polyhedral.
\end{thm}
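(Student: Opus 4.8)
The plan is to upgrade Example~\ref{ex:klm} to a family of examples on a fixed \emph{rational} model $W_d$ in every dimension $d\ge 4$, and then to transport such an example to a suitable birational model of the given $X$ by dominating $W_d$. For the first part, the key observation is that if a smooth projective variety $W$ carries an admissible flag $W_\bullet$ and a big divisor $D$ with $\Delta_{W_\bullet}(W;D)$ non-polyhedral, then so does $W\times\P^1$, equipped with the flag $W\times\P^1\supset W\times\{\infty\}\supset W_1\times\{\infty\}\supset\cdots$ and the divisor $\mathrm{pr}_1^{\ast}D+\mathrm{pr}_2^{\ast}\O_{\P^1}(1)$: the restriction maps $H^0\bigl(W\times\P^1;\,m(\mathrm{pr}_1^{\ast}D+\mathrm{pr}_2^{\ast}\O(1))\bigr)\to H^0(W\times\{\infty\};mD)$ are surjective for every $m$, so the associated Newton--Okounkov body is $[0,1]\times\Delta_{W_\bullet}(W;D)$, and this is non-polyhedral since its slice at $\mathrm{pr}_2=\tfrac{1}{2}$ is $\Delta_{W_\bullet}(W;D)$. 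Beginning with Example~\ref{ex:klm}, where $D$ is moreover ample, and multiplying by $d-4$ copies of $\P^1$, we obtain $W_d:=\P^2\times\P^2\times(\P^1)^{d-4}$, an admissible flag $W_\bullet$ on it, and an ample divisor $D_W$ with $\Delta_{W_\bullet}(W_d;D_W)$ non-polyhedral; note that $W_d$ is rational.

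Now fix $X$ normal projective of dimension $d\ge 4$. After replacing $W_d$ by a resolution of indeterminacy of a birational map from $W_d$ to $\P^d$, and $W_\bullet$ by its strict transform (blown up further if necessary so as to stay admissible, which changes neither the body nor its non-polyhedrality), we may assume there is a birational morphism $W_d\to\P^d$. Fix a projective embedding of $X$ and let $f\colon X\to\P^d$ be a general linear projection, a finite surjective morphism. Let $\tilde X$ be a resolution of the component of the fibre product $X\times_{\P^d}W_d$ dominating $X$. Then $\tilde X\to X$ is birational, being up to resolution a base change of the birational $W_d\to\P^d$, and $\psi\colon\tilde X\to W_d$ is generically finite and surjective, being a base change of the finite $f$. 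Set $H:=\psi^{\ast}D_W$. Then $H$ is big, since $\psi$ is generically finite and $D_W$ is big, and semi-ample, being the pullback of an ample divisor; in particular $E_{\tilde X,H}(x,q)$ is holonomic by Lemma~\ref{l:semiample}.

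It remains to choose an admissible flag $X_\bullet$ on $\tilde X$ (or on a further blow-up, which is harmless) with $\Delta_{X_\bullet}(\tilde X;H)$ non-polyhedral. The idea is to extend the rank-$d$ Okounkov valuation $\nu_{W_\bullet}$ of $K(W_d)$ to a valuation $\tilde\nu$ of the finite extension $K(\tilde X)$ realised by an admissible flag $X_\bullet$ lying over $W_\bullet$ --- a local monomialization of the Abhyankar valuation $\nu_{W_\bullet}$ along $\psi$ --- arranged so that $\tilde\nu$ is the \emph{unique} extension of $\nu_{W_\bullet}$. Granting this, $\nu_{X_\bullet}(\psi^{\ast}s)=\mathrm{diag}(e_1,\dots,e_d)\cdot\nu_{W_\bullet}(s)$, where $e_i$ is the ramification index of $\psi$ along the $i$-th member of $X_\bullet$; hence $\mathrm{diag}(e_1,\dots,e_d)\cdot\Delta_{W_\bullet}(W_d;D_W)\subseteq\Delta_{X_\bullet}(\tilde X;H)$, and since the extension is unique and the residue fields both equal $\C$ one has $e_1\cdots e_d=\deg\psi$, so the two convex bodies have the same Euclidean volume $\tfrac{1}{d!}\deg(\psi)\vol(D_W)=\tfrac{1}{d!}\vol(H)$, and therefore coincide. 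Thus $\Delta_{X_\bullet}(\tilde X;H)$ is an invertible linear image of the non-polyhedral body of the first paragraph, hence non-polyhedral.

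The genuinely delicate point in this argument --- and the step I expect to be the main obstacle --- is securing the unique extension of $\nu_{W_\bullet}$, equivalently controlling how the sections of $mH$ that are not pulled back from $W_d$ enlarge the Okounkov valuation: a general projection $f$ is \emph{unramified} along a general flag, which is the wrong behaviour, since then $\mathrm{diag}(e_i)$ is the identity while $\deg\psi>1$, so one must choose $f$ and $W_\bullet$ so that $f$ ramifies suitably along the flag, or pass to a Galois model and bound valuations of sections via their norms. A variant that bypasses the cover, at the cost of re-running the computation of \cite[Example~3.4]{klm}, is to blow up $X$ so as to plant a very general nine-point blow-up of $\P^2$ --- whose pseudoeffective cone is non-polyhedral --- as a codimension-$(d-2)$ member of the flag, and then to build a semi-ample $H$ so that the restricted classes $\bigl(H-\sum_j t_jX_j\bigr)|_{X_{d-2}}$ sweep out a region meeting the curved part of that cone's boundary; there the difficulty migrates to keeping $H$ positive enough along this surface while semi-ample.
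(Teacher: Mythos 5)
Your first paragraph is fine: the $W\times\P^1$ lemma is correct (Künneth plus the fact that a slice of a polytope is a polytope shows $[0,1]\times\Delta$ is polyhedral iff $\Delta$ is), and it is a legitimate alternative to the paper's choice of the flag $Y_k=\P^{d-2-k}\times\P^2$ on $\P^{d-2}\times\P^2$ for the dimension-raising step.

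The genuine gap is in the second step, and you have correctly identified it yourself. Your main route asks that the rank-$d$ Okounkov valuation $\nu_{W_\bullet}$ extend \emph{uniquely} to $K(\tilde X)$ along a generically finite cover $\psi$, so that the two Okounkov bodies match up via the diagonal ramification matrix and the volume count $e_1\cdots e_d=\deg\psi$. But a generic linear projection gives $e_i=1$ for all $i$ along a general flag, so $\mathrm{diag}(e_i)=\mathrm{id}$ while $\deg\psi>1$: the inclusion $\Delta_{W_\bullet}\subset\Delta_{X_\bullet}$ is then strict (the volumes differ by the factor $\deg\psi$), and you learn nothing about $\Delta_{X_\bullet}$. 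You do not supply a construction forcing the fully ramified situation, so this route does not close. The variant you sketch at the end is closer in spirit to the actual argument, but it is left at the level of "build $H$ so the restricted classes sweep out a region meeting the curved boundary," with the admitted difficulty "migrated" rather than resolved — this is precisely the content that needs to be proved.

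The paper avoids the valuation-theoretic difficulty entirely by not trying to compute $\Delta_{X_\bullet}(\tilde X;H)$ in full. It takes $\pi\colon\tilde X\to\P^{d-2}\times\P^2$ generically finite, pulls back the KLM flag (Bertini arguments keep the preimages $X_k=\pi^{-1}Y_k$ smooth and irreducible), sets $H=\pi^*D$, and then analyzes only the slice $\Delta_{X_\bullet}(\tilde X;H)\cap\bigl(\{0\}^{d-3}\times[0,\epsilon]\times\R^2\bigr)$. By the restriction result of \cite[Proposition 3.1]{klm} together with continuity of slices, this reduces to Okounkov bodies of the divisors $\pi^*(D|_{Y_1}-sY_2)$ on the \emph{surface} $X_2=\pi^{-1}(E\times E)$. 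The crucial and elementary observation is that pullback along a generically finite dominant $\pi$ gives $\pi^*\alpha\in\Nef(X_2)\Leftrightarrow\alpha\in\Nef(Y_2)$ (and likewise for pseudoeffective and big), and $\Nef(Y_2)=\NE(Y_2)$ for the abelian surface $Y_2=E\times E$; thus the boundary curve of $\Nef(X_2)$ traced out as $s$ varies is identified with the conic boundary curve of $\Nef(Y_2)$ computed by KLM, and non-polyhedrality follows. This circumvents exactly the obstacle your argument runs into: there is no need to understand all sections of $mH$, only the restricted linear series on a surface where the numerical geometry is completely controlled by $Y_2$.
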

\begin{proof}
Let us first assume that $d=4$.
Up to blowing-up, we may assume that
there exists a generically finite morphism $\pi:X\to\mathbb P^2\times \mathbb P^2$
with $X$ smooth.
(e.g. choose an appropriate birational model of a Noether normalization $X\to\mathbb P^4$).

We may choose the flag elements in the example of \cite{klm} in the previous section 
such that $X_k:=\pi^{-1}Y_k$ is irreducible and smooth for all $k<4$. 
(If $Y_1=\mathbb P^2\times E$ is general in $|\mathcal O(0,3)|$, then its inverse image $X_1$ is
smooth irreducible. To construct $Y_2$, consider the first projection $\mathbb P^2\times E\to\mathbb P^2$ and pullback a general $PGL(3)$ translate of $E\subset\mathbb P^2$. The pullback $X_2$ of $Y_2$
to $X_1$ is then smooth by Kleiman transversality,  
connected by the Fulton--Hansen theorem \cite[Theorem 3.3.6]{laz}, hence also irreducible. See also
\cite[Example 3.3.10]{laz}. The curve $Y_3$ can be chosen general in a very ample linear
series, thus its pullback to $X_2$ is smooth irreducible.)
Let $X_4$ be an arbitrary
point in $\pi^{-1}Y_4$. Then $X_{\bullet}$ is an admissible flag on $X$. Put $H:=\pi^*D$

We claim that for some $\epsilon>0$, the slice $\Delta_{X_{\bullet}}(X;H)\cap\big(\{0\}\times[0,\epsilon]\times\mathbb R^2\big)$ is non-polyhedral.
Pick $\epsilon>0$ such that $D|_{Y_1}-s Y_2$ is ample for all $s\in[0,\epsilon]$. 
It is not hard to check that for $s\in[0,\epsilon]$ the identity
$$\Delta_{X_{\bullet}}(X;H)\cap\big(\{0\}\times\{s\}\times\mathbb R^2\big)=
\Delta_{X_{\bullet}}(X_2;\pi^*(D|_{Y_1}-s Y_2))$$
holds: \cite[Proposition 3.1]{klm} handles the case of restricting ample divisors. The
big and semiample case follows from this by the continuity of slices in the global Newton-Okounkov
body of $X$ by considering a collection of divisors $E_s$ on $X_1$ 
such that $\pi^*(D|_{Y_1}-sY_2)-tE_s$ is ample on $X_1$ for all $t\in[0,1]$ and $s\in(0,\epsilon]$.

As in \cite[Remark 3.3, Example 3.4]{klm}, to conclude that $\Delta_{X_{\bullet}}(X;H)$ is not polyhedral, it is enough to show that the cone which is the translation
by $[H|_{X_2}]$ of the convex span of $-[X_2|_{X_2}]$ and $-[X_3]$ meets the boundary
of $\Nef(X_2)$ along a curve that is not piecewise linear.

For this, observe that $\pi^*\alpha\in\Nef(X_2)$ iff $\alpha\in\Nef(Y_2)$, and $\pi^*\alpha\in\NE(X_2)$ iff $\alpha\in\NE(Y_2)$. Similar equivalences hold for ample and for big classes respectively. Furthermore, $\Nef(Y_2)=\NE(Y_2)$ by \cite[Example 3.4]{klm} and $\pi^*:\NS(Y_2)\to\NS(X_2)$ is a linear injection, since $\pi$ is dominant. 
Therefore the boundary curve (in the sense of the previous paragraph) on $\Nef(X_2)$ is identified via pullback with the boundary curve on $\Nef(Y_2)$. By \cite[Example 3.4]{klm}, the latter is conic, not piecewise linear.

For $X$ of arbitrary dimension $d\ge4$, as above we can assume that there is a generically 
finite dominant morphism $\pi: X\to \P^{d-2}\times\P^2$ with $X$ smooth. Consider in the image the flag $Y_\bullet$ given as follows:
$$
	Y_k = \mathbb P^{d-2-k}\times\P^2
$$
for $k\le d-4$, and $Y_{d-3},\ldots,Y_{d}$ is the flag from Example \ref{ex:klm}.
 We can again argue by Bertini type arguments that the pre-images $X_k:=\pi^{-1}Y_k$ for $k<d$ are smooth and irreducible, thus any choice of the point $X_d$ makes $X_\bullet$ into an admissible flag. Now the same arguments as above yield that for $D=\pi^*\O_{\P^{d-2}\times\P^2}(3,1)$ and some $\epsilon>0$ the slice 
 $$\Delta_{X_{\bullet}}(X;D)\cap\big(\{0\}^{d-3}\times [0,\epsilon]\times\mathbb R^2\big)
 $$
 is non-polyhedral.
\end{proof}

\begin{remark}
Considering the original question, it is natural to ask whether at least the reverse implication is true, i.e., whether the polyhedrality of some Newton--Okounkov body implies holonomicity of the corresponding complexity function.  However, the fact that Newton--Okounkov bodies do not carry information about all sections in each degree of a graded linear series suggests a negative answer to this question as well. A good candidate would be a linear series whose semi-group of valuation vectors is not finitely generated but whose Newton--Okounkov body is polyhedral nonetheless. 
\end{remark}
%\section{An example on $\mathbb P^4$}
%
%
%\section{Lifting to an arbitrary projective $4$-fold}
%
%Let $X$ be a projective 4-fold. 
%
%\begin{lemma}For every admissible flag $Y_{\bullet}$ on $\mathbb P^4$, there exists a finite map $f:X\to\mathbb P^4$ such that $X_k:=f^{-1}Y_k$ is irreducible for $k<4$
%and $f^{-1}Y_4$ is a collection of smooth points for each $X_k$.
%\end{lemma}
%
%\noindent Setting $X_4$ as one of the points in $f^{-1}Y_4$, we have that $X_{\bullet}$ is an admissible
%flag on $X$.
%
%\begin{proof}Let $\pi:X\to\mathbb P^4$ be an arbitrary Noether normalization.
%By \cite[Example 3.3.10]{laz}, for a general linear automorphism $\rho$ of $\mathbb P^4$,
%the inverse images $\pi^{-1}\rho^{-1}Y_k$ are irreducible for $k<4$ (consider the morphism $\pi\times i_k:X\times Y_k\to\mathbb P^4\times\mathbb P^4$, where $i_k:Y_k\to\mathbb P^4$ is the canonical embedding. The inverse image of the graph of an automorphism $\rho$ is isomorphic to $\pi^{-1}\rho^{-1}Y_k$ and irreducible by the cited result).
%The generality of $\rho$ further ensures that $\rho\circ\pi$ is \' etale over $Y_4$. Then any preimage
%of $Y_4$ is a smooth point of any $X_k$ because smoothness is stable under base change and 
%composition.
%Put $f=\rho\circ\pi$.
%\end{proof} 

\section*{Acknowledgments} We thank Robert Lazarsfeld for raising the question and for useful discussions.

\end{document}